\newtheorem{thm}{Theorem}[section]
\newtheorem{theorem}[thm]{Theorem}
\newtheorem{corollary}[thm]{Corollary}
\newtheorem{proposition}[thm]{Proposition}
\theoremstyle{definition}
\newtheorem{definition}[thm]{Definition}
\newtheorem{example}[thm]{Example}
\newtheorem{observation}[thm]{Observation}
\newtheorem{remark}[thm]{Remark}
\newcommand{\Sets}{\mathrm{(Set)}}
\newcommand{\sets}{\mathrm{(Set)}}
\newcommand{\Hom}{\mathrm{Hom}}
\newcommand{\Ob}{\mathrm{Ob}}
\newcommand{\pr}{\mathrm{pr}}
\newcommand{\Cov}{\mathrm{Cov}}
\newcommand{\SMan}{\mathrm{(SMan)}}
\newcommand{\SSpaces}{\mathrm{(SSpaces)}}
\newcommand{\SSpacesk}{\mathrm{(SSpaces)}_k} %F. Aggiunto io
\newcommand{\SSpacesLk}{\mathrm{(SSpaces^L)}_k} 
\newcommand{\SSpacesL}{\mathrm{(SSpaces^L)}} 
\newcommand{\SSlfg}{\SSpacesk^{\textnormal{lfg}}}%F. pure
\newcommand{\Sch}{\mathrm{(Sch)}}
\newcommand{\SSch}{\mathrm{(SSch)}}
\newcommand{\szs}{superspace site}
\newcommand{\szss}{superspace sites}
\newcommand{\Y}{\mathrm{Yon}}
\newcommand{\op}{\mathrm{op}}
\newcommand{\id}{\mathrm{id}}
\newcommand{\cC}{\mathcal{C}}
\newcommand{\cO}{\mathcal{O}}
\newcommand{\cT}{\mathcal{T}}
\newcommand{\cX}{\mathcal{X}}
\newcommand{\R}{\mathbb{R}}
\newcommand{\C}{\mathbb{C}}
\newcommand{\N}{\mathbb{N}} %F. Aggiunto io
\newcommand{\lra}{\longrightarrow}
\begin{document}

\centerline{\Large\bf Representability in Supergeometry}

\medskip
\centerline{R. Fioresi$^\sharp$, F. Zanchetta$^\star$}

\medskip
\centerline{\it $^\sharp$ Dipartimento di Matematica, Universit\`a di Bologna}
\centerline{\it Piazza di Porta San Donato 5, 40127 Bologna, Italy}
\centerline{\footnotesize e-mail: rita.fioresi@unibo.it}

\medskip
\centerline{\it $^\star$ Mathematics Institute,
Zeeman Building}
\centerline{\it University of Warwick,
Coventry CV4 7AL, England}
\centerline{\footnotesize 
 e-mail: F.Zanchetta@warwick.ac.uk}

\begin{abstract}\footnote{MSC 2000
Subject Classifications: 14M30, 58A50, 14A22, 32C11.
} 
In this paper we use the notion of Grothendieck
topology to present a unified way to approach 
representability in supergeometry, which applies to both
the differential and algebraic settings.
\end{abstract}

\newpage

\section{Introduction}
Supergeometry is the mathematical tool originally developed to study
supersymmetry. It was discovered, in the early 1970s, 
by the physicists Wess and Zumino (\cite{WZ}) and 
Salam and Strathdee (\cite{SS}) among others. 
Supergeometry grew out of the works 
of Berezin (\cite{Ber}), Kostant(\cite{Kon}) and Leites (\cite{leites}), then,
later on, by Manin (\cite{ma}), Bernstein (\cite{dm})  
and others. These authors introduced an algebraic point of view 
on differential geometry, with emphasis on the methods that were 
originally developed in algebraic geometry by Grothendieck 
to handle schemes. In particular, the functor of points approach 
invented by Grothendieck  turned out to be very useful to formalize 
the physical  ``anticommuting variables'' of supersymmetry and provided a 
very useful tool to link algebra and geometry in a categorical way. 
The language developed by Grothendieck is, in fact, powerful enough to 
reveal the geometric nature not just of superschemes, 
but also of supermanifolds and superspaces in general. 

\medskip
In this paper we want to examine representability in the
supergeometric context, with the use of Grothendieck topologies.
In particular, we are able to prove a representability criterion
(see Theorem \ref{rep-crit}), that can be applied to functors 
$\cC^\op \lra \Sets$, where $\cC$ is a  
{\sl superspace site}, that is a full subcategory of the category 
of superspaces $\SSpaces$, with some additional very natural properties
(see Def. \ref{szs-def}). This broadens the range of application
of the criterion, first published in
\cite{ccf}, to include, not only superschemes or supermanifolds, but
also some less trivial categories like Leites regular supermanifolds
and locally finitely generated superspaces, introduced by Alldridge
et al. in \cite{All}. Our hope is that more general objects can
be studied using this criterion, which formalizes the ideas of
Grothendieck, adapting them to the supergeometric context.
  
\medskip
{\bf Acknoledgements}. We wish to thank A. Alldridge and D. Leites
for useful discussions. We thank the anonymous referee for very valuable
comments. 

\section{Preliminaries on Grothendieck topologies} 
\label{prelim-sec}

We start with the notion of {\sl Grothendieck topology}. For more
details we refer the reader to 
\cite{SGA4}, Expos\'e ii,
\footnote{In  \cite{SGA4} what we call ``Grothendieck
topology'' is called ``pretopology'',
we adhere to the terminology in \cite{Vis}.}
\cite{Vis},  \cite{SP}.

\begin{definition}\label{gtop-def} 
Let $\mathcal{C}$ be a category. 
A \emph{Grothendieck topology} $\mathcal{T}$ on $\mathcal{C}$ 
assigns to each object $U\in \Ob(\mathcal{C})$ a collection 
$\Cov(U)$ whose elements are families of morphisms with fixed target $U$,
with the following properties.
\begin{itemize}
\item[(1)] If $V\rightarrow U$ is an isomorphism, 
$\lbrace V\rightarrow U\rbrace\in \Cov(U)$.
\item[(2)] If $V\rightarrow U$ is an arrow and $\lbrace U_i\rightarrow 
U\rbrace_{i\in I}\in \Cov(U)$, the fibered products 
$\lbrace U_i\times_U V\rbrace$ exist and the collection of 
projections $\lbrace U_i\times_U V\rightarrow V\rbrace_{i\in I}\in \Cov(V)$.
\item[(3)] If $\lbrace U_i\rightarrow U\rbrace_{i\in I}\in \Cov(U)$ and 
for each $i$ we have that $\lbrace V_{ij}\rightarrow U_i\rbrace_{j\in J_i}\in 
\Cov(U_i)$, then $\lbrace V_{ij}\rightarrow U_i\rightarrow 
U\rbrace_{i\in I,j\in J_i}\in \Cov(U)$.
\end{itemize}
The pair $(\mathcal{C},\mathcal{T})$ is called a \emph{site}. 
The elements of $\Cov(U)$ are called \emph{coverings}.
\end{definition}

We may abuse the notation and write $\mathcal{U}\in \mathcal{T}$ 
or $\mathcal{U}\in \Cov(\mathcal{C})$ to indicate that 
$\mathcal{U}=\lbrace U_i\rightarrow U\rbrace_{i\in I}$ is a covering 
for the topology $\mathcal{T}$.

\medskip
Now we discuss some key examples, which will be fundamental
for our treatment.

\begin{example} \label{gtop-ex}

\begin{enumerate} 

\item
Let us consider a topological space $X$ and set $\cX_{cl}$ 
to be the category with open sets as objects 
and inclusions as arrows. We say 
$\lbrace U_i\rightarrow U\rbrace_{i\in I}\in \Cov(U)$ if and only if 
$\bigcup_{i\in I}U_i=U$. We obtain a site $(X_{cl},\cX_{cl})$.

\item Let us consider the category $\Sch$ of schemes and define coverings 
of $U$ to be collections of open embeddings whose images cover $U$. 
This is a topology, because of the existence and the properties of the 
fibered product in $\Sch$. This is called the \textit{Zariski topology}.

\end{enumerate}

\end{example}

Now we want to compare different topologies on the same
category.
\begin{definition} \label{ref-def}
Let
$\mathcal{C}$ be a category and let 
$\mathcal{U}$ $=\lbrace U_i\rightarrow U\rbrace_{i\in I}$, 
$\mathcal{V}$ $=\lbrace V_j\rightarrow U\rbrace_{j\in J}$ be two families of 
arrows with fixed target. We say that
%\begin{itemize}
%\item[(1)] 
$\mathcal{V}$ is a \emph{refinement} of $\mathcal{U}$ 
if for every $j\in J$ there exists $i\in I$ such that 
$V_j\rightarrow U$ factors through $U_i\rightarrow U$. 
\end{definition}

We say that the topology $\mathcal{T}$ is \emph{subordinate} to 
the topology $\mathcal{T'}$  if every covering in $\mathcal{T}$ 
has a refinement that is a covering in $\mathcal{T'}$ and 
we write $\mathcal{T}\prec \mathcal{T'}$. If $\mathcal{T}\prec \mathcal{T'}$ 
and $\mathcal{T'}\prec \mathcal{T}$ we say that 
$\mathcal{T}$ and $\mathcal{T'}$ are \emph{equivalent} i. e. 
$\mathcal{T}\equiv \mathcal{T'}$.

\medskip
We are now in the position to define what a sheaf is in this framework.

Let  $(\mathcal{C},\mathcal{T})$ be a site, 
$\mathcal{U}=\lbrace U_i\rightarrow U\rbrace_{i\in I}$ a covering. 
Consider $F:\mathcal{C}^{\op}\rightarrow \mathrm{(Set)}$ and the fibered product 
$U_i\times_U U_j$.
We denote the projection morphisms as
$$
\pr_1:U_i\times_U U_j\rightarrow U_i\quad\mathrm{and}\quad 
\pr_2:U_i\times_U U_j\rightarrow U_j
$$ and the pullback morphisms $F(\pr_1)$, $F(\pr_2)$ as 
$\pr_1^{\ast}$ and $\pr_2^{\ast}$.

\begin{definition}
Let $(C,\mathcal{T})$ be a site and consider 
$F:\mathcal{C}^{\op}\rightarrow \mathrm{(Set)}$. 
\begin{itemize}
\item[(1)] $F$ is \emph{separated} if, given $\mathcal{U}
=\lbrace U_i\rightarrow U\rbrace_{i\in I}
\in \mathcal{T}$ and 
$x,y\in F(U)$ so that their pullback to $F(U_i)$ coincide for every $i$, 
we have $x=y$.
\item[(2)] $F$ is a \emph{sheaf} if, for every covering 
$\mathcal{U}=\lbrace U_i\rightarrow U\rbrace_{i\in I}\in \mathcal{T}$, the diagram
\begin{displaymath}
\xymatrix{
F(U)\ar[r] & \prod_{i\in I}F(U_i) \ar@<1ex>[r] %^{\pr_1^\ast\qquad} 
\ar@<-1ex>[r]%_{\pr_2^\ast\qquad} 
& \prod_{(i,j)\in I\times I}F(U_i\times_U U_j) 
}
\end{displaymath}
is the diagram of an equalizer, in other words the first arrow is injective 
with image 
$$
\lbrace(\xi_i)\in \prod_{i}F(U_i)\:|\:\pr_1^{\ast}(\xi_i)=
\pr_2^{\ast}(\xi_i)\: \mathrm{ on }\:\prod_{i,j}F(U_i\times_U U_j)\rbrace
$$
\end{itemize}
where $\pr^*_i=F(\pr_i)$. 
If 
$\mathcal{U}=\set{f_i:U_i\rightarrow U}
\in \mathcal{T}$, we call the morphisms $F(f_i):F(U)\rightarrow F(U_i)$
\emph{restrictions}.  Notice that
the morphism $F(U)$ $\rightarrow$  $\prod_{i\in I}F(U_i)$ 
is induced by restrictions. 
\end{definition}

We have the following proposition (see \cite{Vis}).

\begin{proposition}
Let $\mathcal{C}$ be a category  and $\mathcal{T}$, $\mathcal{T'}$ 
two topologies on $\cC$. If   $\mathcal{T}\prec\mathcal{T'}$, 
then a sheaf in $\mathcal{T'}$ is also a sheaf in $\mathcal{T}$. 
Furthermore if $\mathcal{T}\equiv\mathcal{T'}$, then the two topologies 
have the same sheaves.
\end{proposition}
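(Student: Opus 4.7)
The plan is to fix a sheaf $F$ for $\mathcal{T}'$, take an arbitrary covering $\mathcal{U} = \{U_i \to U\}_{i \in I} \in \mathcal{T}$, and verify the two halves of the sheaf condition for $\mathcal{U}$ by pulling back to a refinement $\mathcal{V} = \{V_j \to U\}_{j \in J} \in \mathcal{T}'$ guaranteed by $\mathcal{T} \prec \mathcal{T}'$. For each $j$, fix an index $i(j) \in I$ and a factoring morphism $\varphi_j : V_j \to U_{i(j)}$ coming from the definition of refinement.

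For separatedness, if $x, y \in F(U)$ agree after restriction to every $U_i$, then they agree after further restriction to each $V_j$ (via $\varphi_j$), so by the $\mathcal{T}'$-sheaf condition on $\mathcal{V}$ we obtain $x = y$. For the gluing half, suppose $(\xi_i) \in \prod_i F(U_i)$ satisfies $\pr_1^{\ast}\xi_i = \pr_2^{\ast}\xi_{i'}$ on $F(U_i \times_U U_{i'})$. Define $\eta_j := F(\varphi_j)(\xi_{i(j)}) \in F(V_j)$. The cocycle condition for $(\eta_j)$ on $F(V_j \times_U V_k)$ follows from the one for $(\xi_i)$ via the induced map $V_j \times_U V_k \to U_{i(j)} \times_U U_{i(k)}$ (which exists by axiom~(2) applied in $\mathcal{T}'$). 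Hence the $\mathcal{T}'$-sheaf axiom on $\mathcal{V}$ produces a unique $\xi \in F(U)$ with $\xi|_{V_j} = \eta_j$ for all $j$.

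The main technical step is showing $\xi|_{U_i} = \xi_i$ in $F(U_i)$ for each $i$. The idea is to use that, by axiom (2) applied to $\mathcal{V} \in \mathcal{T}'$ and the arrow $U_i \to U$, the family $\{V_j \times_U U_i \to U_i\}_{j \in J}$ is itself a covering in $\mathcal{T}'$; since $F$ is separated with respect to $\mathcal{T}'$, it suffices to compare $\xi|_{U_i}$ and $\xi_i$ after pulling back to each $V_j \times_U U_i$. Both pullbacks land in $F(V_j \times_U U_i)$ and can be computed through the induced map $V_j \times_U U_i \to U_{i(j)} \times_U U_i$: the first equals the restriction of $\xi_{i(j)}$ via $\pr_1$, while the second equals the restriction of $\xi_i$ via $\pr_2$, and these coincide by the original cocycle relation for $(\xi_i)$. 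This is the step that requires the most care with the fibered-product diagrams, and is where the argument essentially lives.

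For the second assertion, given $\mathcal{T} \equiv \mathcal{T}'$ one applies the first part to both directions $\mathcal{T} \prec \mathcal{T}'$ and $\mathcal{T}' \prec \mathcal{T}$, concluding that the classes of sheaves coincide. The only place where something might fail in a general category is the existence of the auxiliary fibered products $V_j \times_U V_k$ and $V_j \times_U U_i$, but these are supplied directly by axiom~(2) in Definition~\ref{gtop-def}, so no additional hypotheses are needed.
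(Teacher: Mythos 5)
Your argument is correct. Note that the paper itself gives no proof of this proposition, simply citing Vistoli; your proof is precisely the standard one found there: pass to a refinement $\mathcal{V}\in\mathcal{T}'$, transport the compatible family along the factoring maps, glue in $\mathcal{T}'$, and then verify $\xi|_{U_i}=\xi_i$ by separatedness over the auxiliary covering $\{V_j\times_U U_i\to U_i\}$ supplied by axiom (2). All the fibered products you invoke do exist for the reason you state, so there is no gap.
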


We now introduce the {\sl slice category}, which will be instrumental
to discuss superspaces over a base superspace.

\begin{definition}\label{slice-def}
Let $\mathcal{C}$ be a category and let 
$S\in\Ob(\mathcal{C})$. We call $\mathcal{C}/S$ 
the \emph{slice category of $\mathcal{C}$ 
over $S$}. This is  the category with objects the pairs
consisting of an object $X$ in $\cC$ and an arrow $f:X\rightarrow S$.
Given two objects $f:X\rightarrow S$ and $h:Y\rightarrow S$, 
a morphism between them is an arrow 
$g:X\rightarrow Y$ of $\mathcal{C}$ such that $h\circ g=f$.
\end{definition}

We now want to lift a Grothendieck topology from a site 
$(\mathcal{C},\mathcal{T})$ to a slice category $\mathcal{C}/S$. 

\begin{definition} \label{slice-top}
Let $(\mathcal{C},\mathcal{T})$ be a site and let $\mathcal{C}/S$ 
be the slice category of $\mathcal{C}$ over $S$. The \emph{slice topology} 
$\mathcal{T}/S$ on $\mathcal{C}/S$ is the topology which has as coverings of 
an object $X/S$ families of morphisms $\mathcal{U}/S=
\set{\varphi_i:U_i\rightarrow X}$, where the $\varphi_i$ are
morphisms in  $\mathcal{C}/S$, and $\mathcal{U}/S\in\mathcal{T}$.
\end{definition}
One can readily check that $\mathcal{T}/S$ 
is a topology over $\mathcal{C}/S$. 

\begin{definition} \label{can-top}
Let us consider a category $\mathcal{C}$ and let $\mathcal{T}$ 
be a Grothendieck topology on it. $\mathcal{T}$ is said to 
be \emph{subcanonical}, 
if all the representable functors are sheaves with respect 
to the topology $\mathcal{T}$. Recall that $F:\mathcal{C}^{\op} \lra \sets$
is \textit{representable} if there exists an object $X \in 
\mathrm{Ob}(\mathcal{C})$ 
such that $F\cong h_X$, where $h_X(Y)=\Hom(Y,X)$ and $h_X(f)\phi=f \circ \phi$.
 $\mathcal{T}$ is called \emph{canonical} if is subcanonical
and every subcanonical topology is subordinate to it. 
\end{definition}

One can show that if $\mathcal{T}$ is subcanonical on $\cC$, 
then $\mathcal{T}/S$ is subcanonical on $\cC/S$
(see \cite{Vis}, Proposition 2.59).

\section{Superspaces and functor of points}\label{sspaces-sec}

We introduce the category of superspaces and some important
subcategories. For all of the terminology and notation
we refer the reader to \cite{dm}, \cite{ma}, \cite{vsv}, \cite{ccf}.

\begin{definition}
A \emph{superspace} $X$ is a topological space $|X|$ 
endowed with a sheaf (in the usual sense) of commutative super rings 
$\mathcal{O}_X$, which is called the \emph{structure sheaf} of $X$,  so that 
for every $p\in |X|$, the stalk $\mathcal{O}_{X,p}$ is a local super ring. 
\end{definition}
Superspaces forms a category denoted by $\SSpaces$ whose morphisms are 
given as follows.
Given two superspaces $(|X|,\mathcal{O}_X)$ and $(|Y|,\mathcal{O}_Y)$, 
an arrow between them is a pair $f=(|f|,f^{\ast})$, so that:
\begin{itemize}
\item[(1)] $|f|:|X|\rightarrow |Y|$ is a continuous map.
\item[(2)] $f^{\ast}:\mathcal{O}_Y\rightarrow f_{\ast}\mathcal{O}_X$ is a map 
of sheaves of super rings. %algebras (which, as usual, preserves parity).
\item[(3)] The map of local super rings %algebras 
$f^{\ast}_p:\mathcal{O}_{Y,|f|(p)}\rightarrow \mathcal{O}_{X,p}$ 
is a local morphism for all $p$.
\end{itemize}

If $X$ is a superspace, we can define its \emph{functor of points}:
$$
h_X: \SSpaces^{\op} \lra \Sets, \quad h_X(T)=\Hom(T,X), \quad
h_X(\phi)(f)=f\circ \phi.
$$
One of the main purposes of this note is to establish when a functor 
$F: \cC^{\op} \lra \sets$, where $\cC$ is a
full subcategory of $\SSpaces,$ is the functor of points of a superspace
or more precisely of an object in the category $\cC$.

\medskip
Following Def. \ref{slice-def}, 
we can define the category of
\emph{superspaces over a base superspace $S$}. The objects
of this category are pairs consisting of 
a superspace $X$ together with a morphism $X \lra S$. Morphisms are defined
accordingly, as the morphisms of superspaces that commute with the given
morphisms to $S$. A special case of particular interest to us is when 
$S=(|S|, k)$, where $|S|$ is a point and the structural
sheaf is just a field $k$. We call
the superspaces over such an $S$, \textit{$k$-superspaces} 
and we denote them as $\SSpacesk$. 
We shall see more on this later.

\begin{remark}
We observe that $k$-superspaces are simply superspaces whose 
structure sheaf is a sheaf of $k$-superalgebras. Moreover the morphisms 
of $\SSpacesk$ are the superspace morphisms which preserve 
the $k$-superalgebra structure, i.e. which are $k$-linear. 
\end{remark}

Consider a superspace $X$. If $|U|$ is open in $|X|$,
we can define the superspace $X_U=(|U|,\mathcal{O}_{X|U})$, together with
the canonical morphism $j_{X|U}:X_U\rightarrow X$. We say that a morphism
of superspaces $\varphi:Y \rightarrow X$ is an \emph{open embedding}, 
if there exist an isomorphism
$\phi :Y\rightarrow X_{U}$ and 
$\varphi =j_{X|U}\circ \phi$. 
We denote by $\varphi (Y)$ the superspace $X_{U}$.
It makes sense to speak about union and intersections of open 
subsuperspaces $U$ and $V$ and
we shall write, with an abuse of notation, 
$U \cup V$ and $U \cap V$ to denote them.
%\hrule

A morphism of superspaces $f:X\rightarrow Y$ is said to be 
a \emph{closed embedding} if $|f|$ is a closed embedding %as topological map 
and the morphism $f^{\ast}:\mathcal{O}_Y\rightarrow f_{\ast}\mathcal{O}_X$ 
is surjective, 
i.e. it induces an isomorphism 
$$
\cO_Y/\mathcal{I}_X\rightarrow f_{\ast}\cO_X, \qquad \mathcal{I}_X:=ker(f^{\ast}).
$$ 
We call $\mathcal{I}_X$ the \emph{vanishing ideal} of $f$ or of $X$.

We say that a morphism $f:X\rightarrow Y$ between 
superspaces is an \emph{embedding} if $f=g\circ h$ where $h$ is 
a closed embedding and $g$ is an open embedding. 
See \cite{All} for more details. % in this setting.

\medskip
Open embeddings are particularly important because of the following fact, 
which is stated in the case of superspaces in \cite{All} and 
for ringed spaces in \cite{EGA}, Chapitre 0, 4.5.2.

\begin{proposition}\label{fiber-prop}
Let $i : U\rightarrow X$ be an open embedding of superspaces. 
Then for every morphism $f: Y\rightarrow X$ of superspaces, 
the fibered product $U \times_X Y$ exists and: 
$$
U\times_XY= %(|U\times_XY|,\mathcal{O}_{U\times_XY})=
(|f|^{-1}(|X_{U}|), 
\mathcal{O}_{Y||f|^{-1}(|X_{U}|)} ) %=Y_{|f|^{-1}(|X_{U}|)}
=Y_{|f|^{-1}(U)}
$$
Furthermore, $\pr_2:U\times_X Y \rightarrow Y$ is an open embedding.
\end{proposition}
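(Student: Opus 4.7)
The plan is to define the candidate fibered product explicitly as $Z := Y_{|f|^{-1}(|U|)}$, equip it with natural projections to $U$ and $Y$, and then verify the universal property directly. This is the only reasonable guess, since on underlying topological spaces the fibered product of an inclusion with a continuous map is just the preimage.

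First I would construct the two projections. The projection $\pi_2 : Z \to Y$ is declared to be the canonical open embedding $j_{Y||f|^{-1}(|U|)}$, which is an open embedding essentially by definition of $Z$; this already takes care of the last sentence of the statement. For $\pi_1 : Z \to U$, note that the continuous map $|f|$ restricts to $|f||_{|Z|} : |Z| \to |U|$, and the sheaf morphism $f^\ast : \mathcal{O}_X \to f_\ast \mathcal{O}_Y$ restricts over the open set $|U| \subset |X|$ to a morphism $\mathcal{O}_{X|U} \to (f||_{|Z|})_\ast \mathcal{O}_{Y||Z|}$, i.e.\ a morphism $\mathcal{O}_U \to (\pi_1)_\ast \mathcal{O}_Z$. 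The local morphism condition on stalks transfers from $f$ because for $p \in |Z|$ the stalks satisfy $\mathcal{O}_{Z,p} = \mathcal{O}_{Y,p}$ and $\mathcal{O}_{U,|f|(p)} = \mathcal{O}_{X,|f|(p)}$. Commutativity $i\circ\pi_1 = f\circ\pi_2$ is immediate from the construction.

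Next I would check the universal property. Suppose we are given a superspace $W$ with morphisms $g_1 : W \to U$ and $g_2 : W \to Y$ such that $i\circ g_1 = f\circ g_2$. On underlying spaces this forces $|g_2|(|W|) \subseteq |f|^{-1}(|U|) = |Z|$, so $|g_2|$ factors through a unique continuous map $|h| : |W| \to |Z|$. Because $\pi_2$ is an open embedding, its structure-sheaf map is identified with the restriction $\mathcal{O}_Y \to \mathcal{O}_Y|_{|Z|}$, and so the pullback morphism $g_2^\ast$, whose image automatically lands in sections supported on $|Z|$, factors uniquely as $(\pi_2)_\ast h^\ast \circ \pi_2^\ast$ for a unique sheaf morphism $h^\ast : \mathcal{O}_Z \to |h|_\ast \mathcal{O}_W$. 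Locality of $h^\ast$ on stalks is inherited from $g_2^\ast$ because the stalks of $\mathcal{O}_Z$ and $\mathcal{O}_Y$ at points of $|Z|$ coincide. One then verifies that $\pi_1 \circ h = g_1$ by applying $i$ and using that $i$ is a monomorphism (it is an open embedding, hence its topological and sheaf components are both monic in the appropriate sense). Uniqueness of $h$ likewise follows from $\pi_2$ being a monomorphism, which is true for any open embedding of superspaces.

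The main obstacle is entirely on the sheaf-theoretic side: keeping the adjunctions $(|\pi_2|^{-1}, (\pi_2)_\ast)$ and the identifications of restriction and pushforward under open embeddings straight, and making sure the local-ring condition is preserved through every restriction and factorization. The topological part is essentially tautological. Once one notices that an open embedding of superspaces is in particular a monomorphism in $\SSpaces$ whose structure map is just the restriction of the ambient sheaf, both the existence and uniqueness arguments reduce to standard sheaf-theoretic bookkeeping, paralleling the classical argument for ringed spaces in \cite{EGA}, Chapitre 0, 4.5.2.
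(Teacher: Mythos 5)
Your proof is correct: the paper itself gives no argument for this proposition, deferring to \cite{All} and to \cite{EGA}, Chapitre 0, 4.5.2, and your construction of $Z=Y_{|f|^{-1}(|U|)}$ with the two projections followed by a direct verification of the universal property is exactly the standard argument from those references. The only point worth polishing is the phrase about $g_2^{\ast}$ ``landing in sections supported on $|Z|$''; the cleaner statement is that $(g_2)_{\ast}\mathcal{O}_W$ is the pushforward of $|h|_{\ast}\mathcal{O}_W$ along the open inclusion $|Z|\hookrightarrow |Y|$, so $g_2^{\ast}$ is determined by, and determines, its restriction to opens contained in $|Z|$.
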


We now define a Grothendieck topology on $\SSpaces$.

\begin{definition}
Consider the category $\SSpaces$ and for $X \in \Ob(\SSpaces)$ 
define $\Cov(X)$ as 
the collections 
$$
\mathcal{U}=\lbrace \varphi_i:U_i\rightarrow X\rbrace_{i\in I}
$$ 
where $U_i$ are superspaces, 
the arrows $\varphi_i$ 
are open embeddings and 
$\bigcup_{i\in I}|U_i|=|X|$,
where, from now on, with an abuse of notation we write
$|U_i|$ in place of $|\varphi_i|(|U_i|)$.

As one can readily check, we obtain
a Grothendieck topology $\mathcal{S}$, that we call 
the \emph{global super topology}.
So we have a site $(\SSpaces,\mathcal{S})$.
\end{definition}

Superspaces can be built by local data, that are suitably
patched together.

\begin{definition}
A \emph{gluing datum} $((U_i)_{i\in I},( U_{ij})_{i,j\in I},(\phi_{ij})_{i,j\in I})$
of superspaces consists of:
\begin{itemize}
\item[$\bullet$] a collection of superspaces $ (U_i)_{i\in I}=(|U_i|, 
\cO_{U_i})_{i\in I}$; 
\item[$\bullet$] a collection of open super subspaces 
$( U_{ij}\subseteq U_i)_{i,j\in I}$ so that for each $i\in I$, $U_{ii}=U_i$.
\item[$\bullet$] a collection of isomorphisms 
$( \phi_{ji}:U_{ij}\rightarrow U_{ji})_{i,j\in I}$ so that the cocycle 
condition holds:
$$\phi_{ki}=\phi_{kj}\circ \phi_{ji}\quad \mathrm{on}\quad U_{ij}\cap U_{ik}$$
\end{itemize}
\end{definition}

If $((U_i)_{i\in I},( U_{ij})_{i,j\in I},(\phi_{ij})_{i,j\in I})$ is a gluing
datum, there exists a superspace $X=(|X|,\mathcal{O}_X)$ and a family of 
open embeddings $\lbrace \phi_i:U_i\rightarrow X\rbrace_{i\in I}$ so that
 $X=\bigcup_{i\in I}\phi_i(U_i)$,
$\phi_i=\phi_j\circ\phi_{ji}$ on $U_{ij}$ for all $i,j\in I$ and finally
$\phi_i(U_{ij})=\phi_j(U_{ji})=\phi_i(U_{i})\cap \phi_j(U_{j})$  for all $i,j\in I$.
$X$ and $\lbrace \phi_i:U_i\rightarrow X\rbrace_{i\in I}$ are uniquely 
determined up to isomorphism.

\medskip
There are some full subcategories of  $\SSpaces$ that are particularly
interesting. We start by examining the differential setting.

\begin{definition} \label{superm-def}
A \emph{real (resp. complex) superdomain} 
of dimension $(p|q)$ is a super ringed space 
$U^{p|q}=(U,\mathcal{O}_{U^{p|q}})$ where $U$ is an open subset 
of $\mathbb{R}^p$ (resp. $\C^p$) and 
$$
\mathcal{O}_{U^{p|q}}(V)= \cO_{U} %\mathit{C}^{\infty}_U(V)
\otimes \bigwedge (\theta^1, \dots,\theta^q)
$$ for $V$ open in $U$.  
$\cO_{U}= C^{\infty}_U$ is 
the sheaf of $\mathit{C}^{\infty}$ functions on $U$
(resp. $\cO_{U}=\mathcal{H}^{\infty}_U$ is the sheaf of holomorphic
functions on $U$), 
$\bigwedge (\theta^1, \dots, \theta^q)$ is the exterior algebra 
in $q$ odd coordinates $\theta^1, \dots, \theta^q$.

\medskip
Let be $M=(|M|,\mathcal{O}_M)$ a $k$-superspace
($k=\R$ or $\C$). We say that $M$ is a \emph{supermanifold} if
$M$ is locally isomorphic, as $k$-superspace, to a real
or complex superdomain.\footnote{In \cite{All} these are called
\emph{presupermanifolds}. In many references the definition of
supermanifold  requires more hypotheses (e.g. Hausdorff) on the
topological space.} 
Real
supermanifolds are called \emph{smooth supermanifolds}, while
complex supermanifolds are called \emph{holomorphic supermanifolds};
their categories are denoted as $\SMan_\R$ and $\SMan_\C$
or simply as $\SMan$, when we do not want to mark the difference
between the smooth and holomorphic treatment.
\end{definition} 

\medskip
As for superspaces we can easily define superschemes (or supermanifolds)
on a base superscheme (or supermanifold) $S$. We leave to the reader
the details.

\medskip
Two further interesting examples of superspaces are the 
{\sl Leites superspaces} and the {\sl locally finitely generated superspaces} 
%which are introduced by the first time in 
(see \cite{All}). 

\begin{definition}
Let $X$ be an object of $\SSpaces_k$ ($k=\R$ or $\C$). 
We say that $X$ is a \emph{Leites regular superspace} if for every open 
subsuperspace $U\subseteq X$, and every integer $p$, the following map 
is a bijection
$$
\Psi :\Hom (U,k^p)\rightarrow (\cO_X(U)_{0})^p,\quad 
\varphi\mapsto (\varphi^*(t_1), \dots,\varphi^*(t_p)) 
$$
Here $k^p$ is a superdomain, $t_1,...,t_p$ are the canonical coordinates 
of $k^p$.
We denote their category with $\SSpacesLk$.
\end{definition}

Leites superspaces are a full subcategory of $\SSpacesk$ 
and open subsuperspaces of Leites regular superspaces are still 
Leites regular. Moreover, we have immediately that 
supermanifolds are Leites regular superspaces, because of Chart 
Theorem (see \cite{ma} Ch. IV).

\begin{remark}
Leites superspaces essentially describe the superspaces for which the 
Chart Theorem holds. Furthermore, it is possible to give a definition of 
Leites superspaces which comprehends not only differentiable or 
holomorphic supermanifolds, but also the real analytic ones. 
We do not introduce them here, referring the reader to \cite{All} 
for more details; their treatment is similar to the other ones.
\end{remark}

We can now define the category of locally finitely generated $k$-superspaces. 
First of all, we need the definition of tidy embedding.

\begin{definition}
Consider an embedding $\varphi:Y\rightarrow X$ of superspaces 
with vanishing ideal $\mathcal{I}$. We say that $\varphi$ is \emph{tidy} 
if for every $x\in \varphi(|Y|)$, every open 
neighbourhood $U\subseteq |X|$ of $x$ and every $f\in\cO_X(U)$, we have:
$$
( \forall y\in U\cap\varphi (|Y|),r\in\N\, :\, f_y\in\mathcal{I}_y+\mathfrak{m}^r_{X,y}) \, \Rightarrow \, f_x\in\mathcal{I}_x
$$
($f_x$ is the image of $f$ in $\cO_{X,x}$). 
We say that a superspace $X$ is \textit{tidy} if id$_X$ is a tidy embedding.
\end{definition}

For more about tidiness, we refer the reader to \cite{All}, Section 3.4. 
We note that open subsuperspaces of tidy superspaces are tidy.

\medskip
We can now recall the definition of locally finitely generated superspace.

\begin{definition}
Let $X$ be an object of $\SSpacesk$. We say that $X$ is 
\emph{finitely generated} if there exists a tidy embedding 
$\varphi: X\rightarrow k^{p|q}$. We say that $X$ is 
\emph{locally finitely generated} if it admits a cover by open subsuperspaces 
which are finitely generated. We denote the full subcategory 
of $\SSpacesk$ of locally finitely generated superspaces as $\SSlfg$.
\end{definition}

Note that open subsuperspaces of locally finitely generated superspaces 
are still in $\SSlfg$. The following
proposition is one of the main results in 
\cite{All} (see Sec. 5). %\comment{R. messo sezione} 

\begin{proposition}\label{subcat-prop}
$\SSlfg$ is a full subcategory of the category of Leites regular 
superspaces; it is finitely complete 
(i.e. all the finite limits exist in it) 
and $\SMan_k$ is a full subcategory of $\SSlfg$. Moreover finite limits
are preserved by the inclusion.
\end{proposition}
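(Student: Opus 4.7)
The plan is to verify the four assertions in order, appealing throughout to the technical properties of tidy embeddings developed in \cite{All}.

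First, for $\SMan_k \subseteq \SSlfg$: by Definition \ref{superm-def} a supermanifold $M$ admits an open cover by superdomains $U^{p|q}$. Each such $U^{p|q}$ sits inside $k^{p|q}$ as an open subsuperspace, and open embeddings into superdomains are tidy (as shown in \cite{All}). Hence each member of the distinguished cover is finitely generated, so $M$ is locally finitely generated.

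Second, to show $\SSlfg$ lies inside the Leites regular superspaces, note that Leites regularity is preserved under open subsuperspaces, and from the definition it is a property that can be checked locally on the source of the morphisms to $k^p$. It therefore suffices to prove that every finitely generated $X$, with tidy embedding $\varphi : X \to k^{p|q}$, is Leites regular. The ambient superdomain $k^{p|q}$ is a supermanifold, hence Leites regular by the Chart Theorem (see \cite{ma} Ch.~IV). The core lemma, proved in \cite{All}, is that a tidy subsuperspace of a Leites regular superspace is Leites regular: given $n$ even sections on an open $U \subseteq X$, one lifts them locally to sections on a neighbourhood in $k^{p|q}$ and uses the Leites property there to produce a morphism to $k^n$; the tidiness of $\varphi$ guarantees that restricting back to $U$ yields a unique morphism corresponding to the originally given tuple.

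Third, for finite completeness of $\SSlfg$ it suffices to exhibit a terminal object and binary pullbacks. The terminal $k$-superspace $k^{0|0}$ is trivially finitely generated. For a cospan $X \to Z \leftarrow Y$ in $\SSlfg$, one first forms the pullback $P$ in $\SSpacesk$ (which exists here since we will describe it explicitly). Working locally, cover $Z$ by finitely generated opens and refine the covers of $X$ and $Y$ by their preimages, so we may assume $X$, $Y$, $Z$ are all finitely generated, with tidy embeddings into superdomains $k^{p_X|q_X}$, $k^{p_Y|q_Y}$, $k^{p_Z|q_Z}$. Then $P$ is realized as a tidy subsuperspace of $k^{p_X + p_Y | q_X + q_Y}$ cut out by the graphs of the compositions into $k^{p_Z|q_Z}$, so $P$ is finitely generated. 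The local pullbacks glue by Proposition \ref{fiber-prop} (for the open-embedding restrictions of the projections) and the uniqueness of fiber products. The resulting object is a pullback in $\SSlfg$ and coincides by construction with the one formed in $\SSpacesk$, yielding the preservation of finite limits by the inclusion.

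The main obstacle is the finite completeness statement, and specifically the verification that the pullback of two tidy embeddings into superdomains, taken inside the product superdomain, is again tidy, together with the compatibility of these local pullbacks under the gluing. Both facts are developed in \cite{All} and rest on the stability of tidiness under products and under restriction to open subsuperspaces; once they are granted, the rest of the argument reduces to a standard gluing calculation using Proposition \ref{fiber-prop}.
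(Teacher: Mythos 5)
First, a point of reference: the paper does not prove this proposition at all --- it is stated verbatim as ``one of the main results in \cite{All} (see Sec.\ 5)'' and used as a black box. So there is no internal proof to compare against; what you have written is a reconstruction of the argument in \cite{All}. As such, your outline has the right architecture (supermanifolds are covered by superdomains, which embed tidily into $k^{p|q}$; Leites regularity descends to tidy subsuperspaces of Leites regular spaces; finite completeness reduces to a terminal object plus binary pullbacks, built locally inside product superdomains and glued). But every substantive step is itself deferred to \cite{All}, so this is a proof outline rather than a proof --- which is defensible here, since the paper does exactly the same thing, only more tersely.

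There is, however, one place where your sketch asserts something stronger than what is true (or at least than what \cite{All} establishes), namely the final claim that the locally constructed pullback ``coincides by construction with the one formed in $\SSpacesk$,'' from which you derive the preservation statement. Fibered products in the full category $\SSpacesk$ are not known to exist in general (the paper only guarantees them along open embeddings, Proposition \ref{fiber-prop}), and the construction in \cite{All} verifies the universal property of $X\times_Z Y$ only against locally finitely generated test objects: the structure sheaf is obtained by passing from the naive ideal generated by the elements $f^*(t_i)\otimes 1-1\otimes g^*(t_i)$ to a \emph{tidy} ideal, and showing that this completion does not destroy the universal property is precisely the hard part --- ``cut out by the graphs'' glosses over it. Moreover, ``the inclusion'' in the statement refers to $\SMan_k\hookrightarrow\SSlfg$ (finite limits of supermanifolds that exist in $\SMan_k$ remain limits in $\SSlfg$), not to an inclusion of $\SSlfg$ into all $k$-superspaces; your argument addresses the wrong functor. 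To repair this last part you would need to argue that when $X$, $Y$, $Z$ are supermanifolds and the fibered product exists in $\SMan_k$ (e.g.\ for products, or transversal pullbacks), the tidy-hull construction returns that supermanifold --- a statement that again lives in \cite{All} and does not follow from what you wrote.
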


\begin{observation} \label{subcatalg-obs}
Notice that the following categories are all full subcategories of $\SSpaces_k$:
$\SMan_k$, $\SSpacesLk$, $\SSlfg$.
Similarly, the categories $\SMan_k/S$, $\SSpacesLk/S$, $\SSlfg/S$
are full subcategories of $\SSpaces_k/S$. 
\end{observation}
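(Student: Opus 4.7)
The plan is to verify each fullness statement directly from the relevant definitions, and then to lift fullness from a category to its slice category by a general categorical argument.

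For the three inclusions into $\SSpaces_k$, observe first that in each case only a property on objects is imposed, while morphisms are taken to be all morphisms of $k$-superspaces between such objects. Specifically, $\SMan_k$ is defined as those $k$-superspaces which are locally isomorphic to a real or complex superdomain, with no restriction on morphisms beyond being $k$-superspace morphisms; $\SSpacesLk$ is already asserted after its definition to be a full subcategory of $\SSpaces_k$; and $\SSlfg$ is introduced explicitly as the full subcategory on locally finitely generated objects. Therefore the fullness of all three inclusions is immediate from the definitions.

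For the slice-category claim I would prove once and for all the following general fact: if $\cC$ is a full subcategory of $\cD$ and $S \in \Ob(\cC) \subseteq \Ob(\cD)$, then $\cC/S$ is a full subcategory of $\cD/S$. Indeed, an object of $\cC/S$ is an arrow $f: X \to S$ in $\cC$; since $\cC \subseteq \cD$ this is also an arrow in $\cD$, hence an object of $\cD/S$. A morphism $(X,f) \to (Y,h)$ in $\cC/S$ is an arrow $g: X \to Y$ in $\cC$ with $h \circ g = f$. By fullness of $\cC \subseteq \cD$, the set of such $g$ in $\cC$ agrees with the set of such $g$ in $\cD$, and the commutation condition $h \circ g = f$ is the same in both categories, so the morphism sets agree.

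Applying this general fact to each of the three cases $\SMan_k$, $\SSpacesLk$, $\SSlfg \subseteq \SSpaces_k$, with any chosen base $S$ belonging to the relevant subcategory, yields the three slice statements. There is no real obstacle: the content of the observation is essentially bookkeeping, collecting for later reference facts already encoded in the definitions and in Proposition \ref{subcat-prop}. The only point one must be careful about is distinguishing clearly between conditions on objects (which define the subcategories) and conditions on morphisms (which are inherited unchanged), so that the fullness passes transparently to slice categories.
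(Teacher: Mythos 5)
Your proposal is correct and matches the paper's intent: the paper states this as an observation with no proof precisely because fullness of the three inclusions is built into (or explicitly asserted alongside) their definitions, and the passage to slice categories is the standard categorical fact you prove. Your general lemma that $\cC \subseteq \cD$ full implies $\cC/S \subseteq \cD/S$ full is exactly the implicit content of the second half of the observation.
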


We now turn to examine the algebraic category.

\begin{definition}
A \emph{superscheme} $X$ is a superspace $(|X|,\mathcal{O}_X)$ 
where $(|X|, \mathcal{O}_{X,0})$ is an ordinary scheme and $\mathcal{O}_{X,1}$ 
is a quasi coherent sheaf of $\mathcal{O}_{X,0}$-modules.
Morphisms of superschemes are the morphisms of the corresponding
superspaces. We denote the category of superschemes by $\SSch$.
\end{definition}

Notice that $\SSch/S$ is a full subcategory of $\SSpaces/S$.

\section{Representability}\label{rep-sec}

In this section we examine a unified way to write representability
theorems in supergeometry. Given
a full subcategory $\cC$ of $\SSpaces$, the question we want to study is how to
characterize among all functors $F: \cC^{\op} \lra \sets$ those which
are the functor of points of an object in $\cC$.

\medskip
We start with the notion of \szs, which will be instrumental
for the general result on representability we want to give.

\begin{definition}\label{szs-def}
We say that a site $(\mathcal{C},\mathcal{T})$ is a \emph{\szs} if:

\begin{itemize}
\item[$\bullet$] $\mathcal{C}$ is a full subcategory of $\SSpaces$.
\item[$\bullet$] $\mathcal{T}$ is a topology
such that, for $\mathcal{U}=\lbrace f_i:U_i\rightarrow X\rbrace_{i\in I}
\in \cT$, we have that the arrows $f_i$ are open embeddings
and $\bigcup_{i\in I}|U_i|=|X|$. 
\item[$\bullet$] Given $X\in \Ob(\mathcal{C})$ and an open subset 
$|U|\subseteq |X|$, $X_{U}\in \Ob(\mathcal{C})$.
\item[$\bullet$] Given a gluing datum 
$((U_i)_{i\in I},( U_{ij})_{i,j\in I},(\phi_{ij})_{i,j\in I})$ in $\mathcal{C}$, 
and $X$ the corresponding superspace, 
with $\lbrace \phi_i:U_i\rightarrow X\rbrace_{i\in I}$ family of open embeddings, 
we have $X\in\Ob(\mathcal{C})$.
\end{itemize}
\end{definition}

Notice that the last two conditions say that 
\szs\:\:$\mathcal{C}$ is closed under the operation of gluing and 
under the operation of restriction to open subsets.
It is immediate to extend the previous definition 
to $\SSpaces/S$.

\begin{observation} \label{sites-obs}
Proposition \ref{subcat-prop} and Observation
\ref{subcatalg-obs}, together with Prop. 
\ref{fiber-prop}, 
give us that $\SMan_k$, $\SSpacesLk$, $\SSlfg$, $\SSch$  are
all examples of \szss,  where the topology is given by the open embeddings
i.e. it is the global super topology.
\end{observation}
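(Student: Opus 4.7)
The plan is to verify, for each of $\SSch$, $\SMan_k$, $\SSpacesLk$, $\SSlfg$, the four bullets of Definition~\ref{szs-def}. The first bullet is exactly the content of Observation~\ref{subcatalg-obs}, together with the remark that $\SSch$ is a full subcategory of $\SSpaces$. The second bullet, asserting that coverings are families of open embeddings jointly covering the target, is built into the global super topology $\mathcal{S}$ restricted to the subcategory. The only nontrivial point in making sure $\mathcal{S}$ actually defines a Grothendieck topology on the subcategory is the existence of fibered products along open embeddings inside it; by Proposition~\ref{fiber-prop}, such a fibered product equals $Y_{|f|^{-1}(U)}$, an open subsuperspace of $Y$, so bullet two reduces to bullet three.

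Bullet three (closure under restriction to open subsuperspaces) I would dispatch by appealing to facts already recorded: it is classical for $\SSch$; for $\SMan_k$ the property of being locally isomorphic to a superdomain is manifestly local and hence stable under open restriction; and for $\SSpacesLk$ and $\SSlfg$ the excerpt explicitly notes preservation under taking open subsuperspaces.

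For bullet four (closure under gluing), I would fix a gluing datum $((U_i)_{i\in I},(U_{ij}),(\phi_{ji}))$ in the subcategory, with resulting superspace $X$ and associated open embeddings $\phi_i:U_i\to X$. For $\SSch$ this is the classical gluing of schemes, transparent on the odd part since $\cO_{X,1}$ is built from the $\cO_{U_i,1}$ by the same datum. For $\SMan_k$, each $U_i$ is covered by superdomains whose images under the open embedding $\phi_i$ are still superdomains, yielding a superdomain cover of $X$. For $\SSlfg$, each $U_i$ admits a cover by finitely generated open subsuperspaces, whose images under $\phi_i$ are open in $X$ and isomorphic (as superspaces) to finitely generated ones, hence finitely generated. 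Thus $X\in\SSlfg$.

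The main obstacle is the Leites case, which is not merely a local-covering condition. Here one must show, for every open subsuperspace $V\subseteq X$ and every $p$, that $\Psi:\Hom(V,k^p)\to (\cO_X(V)_{0})^p$ is bijective. The strategy is a sheaf argument: both $V\mapsto \Hom(V,k^p)$ and $V\mapsto(\cO_X(V)_{0})^p$ are sheaves on $|X|$ (the former by gluing of $k$-superspace morphisms into $k^p$, the latter tautologically), and $\Psi$ is a morphism of sheaves. On the cover $\{\phi_i(U_i)\cap V\}_{i\in I}$ of $V$ each member is an open subsuperspace of the Leites regular $\phi_i(U_i)\cong U_i$, and the same is true for pairwise intersections, so the Leites bijectivity holds on these pieces. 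A standard equaliser chase then upgrades this to bijectivity of $\Psi$ on $V$, completing the verification of bullet four for $\SSpacesLk$.
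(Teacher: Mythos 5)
Your verification is correct and follows the same route the paper implicitly takes: the Observation is justified only by citing Proposition~\ref{subcat-prop}, Observation~\ref{subcatalg-obs} and Prop.~\ref{fiber-prop}, and your argument is exactly the check of the four bullets of Definition~\ref{szs-def} against those facts. The extra detail you supply for closure under gluing --- in particular the sheaf-theoretic argument that Leites regularity is preserved, which none of the cited results directly addresses --- is a correct filling-in of what the paper leaves unstated.
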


We have the following proposition.

\begin{proposition}
Let be $(\mathcal{C},\mathcal{T})$ a \szs. Then $\mathcal{T}$ is subcanonical.
Hence every representable functor 
$F:\mathcal{C}^{\op}\rightarrow \mathrm{(Set)}$ is a sheaf for $\mathcal{T}$.
\end{proposition}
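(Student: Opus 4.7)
The plan is to verify, for each $Y\in\Ob(\mathcal{C})$, that $h_Y$ satisfies the sheaf condition relative to any covering $\mathcal{U}=\{f_i:U_i\to X\}_{i\in I}$ in $\mathcal{T}$. Since $\mathcal{C}$ is a full subcategory of $\SSpaces$ and the relevant open subsuperspaces and fibered products lie in $\mathcal{C}$ by the very definition of superspace site (combined with Proposition \ref{fiber-prop}), the whole question reduces to the classical gluing of morphisms of (super) ringed spaces over an open cover.

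For separation, I would argue that if $\varphi,\psi:X\to Y$ satisfy $\varphi\circ f_i=\psi\circ f_i$ for every $i$, then the underlying continuous maps coincide on each $|U_i|$ and hence on $|X|=\bigcup_i|U_i|$. The induced maps of structure sheaves $\mathcal{O}_Y(V)\to\mathcal{O}_X(|\varphi|^{-1}V)$ must then also coincide, because they agree on the open cover $\{|\varphi|^{-1}V\cap|U_i|\}$ of $|\varphi|^{-1}V$ and $\mathcal{O}_X$ is a sheaf.

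For the equalizer property, let $(\varphi_i)_{i\in I}$ be a compatible family. By Proposition \ref{fiber-prop}, the fibered product $U_i\times_X U_j$ is the open subsuperspace of $U_i$ (or of $U_j$) supported on $|U_i|\cap|U_j|$, so compatibility translates precisely to the fact that the $\varphi_i$ agree pairwise on overlaps. I would then glue by the standard recipe: define $|\varphi|:|X|\to|Y|$ as the unique continuous map restricting to $|\varphi_i|$ on each $|U_i|$; define the comorphism $\varphi^{\ast}:\mathcal{O}_Y\to\varphi_{\ast}\mathcal{O}_X$ by, for each open $V\subseteq|Y|$ and each $s\in\mathcal{O}_Y(V)$, invoking the sheaf property of $\mathcal{O}_X$ to glue the sections $\varphi_i^{\ast}(s|_{|\varphi_i|^{-1}V})$ along the cover $\{|\varphi|^{-1}V\cap|U_i|\}$ of $|\varphi|^{-1}V$. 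Locality of $\varphi_p^{\ast}$ at each stalk is inherited from locality of the $(\varphi_i)_p^{\ast}$, so $\varphi=(|\varphi|,\varphi^{\ast})$ is a genuine morphism of superspaces, and $\varphi\circ f_i=\varphi_i$ by construction.

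The main obstacle is really only sheaf-theoretic bookkeeping; the one structural point to flag is that the glued morphism $\varphi$ is a priori only an arrow in $\SSpaces$, and one needs it to lie in $\mathcal{C}$. This, however, is automatic: $X,Y\in\Ob(\mathcal{C})$ and $\mathcal{C}$ is a full subcategory of $\SSpaces$, so every $\SSpaces$-morphism between them is a $\mathcal{C}$-morphism. The last sentence of the proposition is then a direct application of Definition \ref{can-top}.
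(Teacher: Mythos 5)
Your proof is correct. In fact the paper states this proposition without any proof at all, so there is no argument to compare against; what you give — reducing the sheaf axiom for $h_Y$ over a covering by open embeddings to the classical gluing of morphisms of (super) ringed spaces, using Proposition \ref{fiber-prop} to identify $U_i\times_X U_j$ with the overlap, and invoking fullness of $\mathcal{C}$ in $\SSpaces$ to conclude that the glued morphism is again an arrow of $\mathcal{C}$ — is precisely the standard argument the authors are implicitly relying on, and the fullness point you flag is indeed the one place where the definition of \szs\ is genuinely needed.
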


Now we would like to establish the converse, that is, we want to understand, 
when given a \szs\:\: $(\mathcal{C},\mathcal{T})$, a sheaf is representable,
in other words it is the functor of points of an object in $\cC$. 
This is a most relevant question in supergeometry, since very often
the only way to get hold of a supergeometric object is through
its functor of points. 

\begin{definition}
Given $F, G:\mathcal{C}^{\op}\rightarrow \mathrm{(Set)}$, 
a natural transformation $f: F \rightarrow G$ between them is called 
\emph{representable} or a \emph{representable morphism} 
if for every object $X$ in $\mathcal{C}$ and every natural transformation 
$g:h_X\rightarrow G$, the functor $F\times_Gh_X$ is representable.
\end{definition}

We now give the notion of open subfunctor. 

\begin{definition}\label{open-fun}
Let us consider the
functors $U, G:\mathcal{C}^{\op}\rightarrow \mathrm{(Set)}$ and a natural
transformation $f:U\rightarrow G$. We say that $U$ 
is an \emph{open subfunctor} of $G$, 
if $f$ is a monomorphism, it is representable
and for every natural transformation $g:h_X\rightarrow G$, $X \in \cC$,
the second projection $\pr_2:U\times_Gh_X\rightarrow h_X$, corresponds
to an open embedding $\pr_2^{\Y}:Z \lra X$, with $h_Z\cong U \times_G h_X$,
($\pr_2^{\Y}:Z \lra X$ corresponds to
the second projection of the fibered product
via the Yoneda's Lemma).
\end{definition}

\begin{definition}
Let us consider a \szs\:\: $(\mathcal{C},\mathcal{T})$ and take a 
functor $F:\mathcal{C}^{\op}\rightarrow \mathrm{(Set)}$. 
We say that a family $\lbrace f_i:U_i\rightarrow F\rbrace_{i \in I}$ 
of open subfunctors is %a %\emph{$\mathcal{T}$-covering} 
an \emph{open covering} of $F$, if for every $X\in \Ob(\mathcal{C})$ 
and every natural transformation $g:h_X\rightarrow F$, 
the family $\lbrace (\pr_2)_i^\Y: X_i\rightarrow X\rbrace_{i\in I}$  
is a covering of $X$,
where the $X_i$ are the superspaces such that
 $h_{X_i}\simeq U_i\times_Fh_X$. % with respect to $\mathcal{T}$.\\
If the $U_i$ are representable functors, we will say that $F$ has an 
%\emph{$\mathcal{T}$-
\emph{open covering by representable functors}.
\end{definition}

\begin{observation}
Observe that, in a category $\mathcal{C}$, given $f:h_X\rightarrow h_Z$ and 
$g:h_Y\rightarrow h_Z$, then the fibered product $h_X\times_{h_Z}h_Y$ 
exists and it is representable if and only if $X\times_ZY$ 
exists in $\mathcal{C}$ and in this case, we have 
$h_X\times_{h_Z}h_Y\simeq h_{X\times_ZY}$. 
Then a morphism $f:h_X\rightarrow h_Z$ is representable if and only if, 
for every $g:h_Y\rightarrow h_Z$, 
$X\times_ZY$ exists. 
So we have that in every \szs, a morphism $f:h_X\rightarrow h_Z$ such that 
$f^\Y:X\rightarrow Z$ is an open embedding is always representable
($f^\Y$ denotes the morphism between superspaces corresponding to $f$
via the Yoneda's Lemma). 

\medskip
On the other hand
in the 
categories $\, \SSch$ and $\SSlfg$, fibered products always exist,
hence the morphism $f:h_X\rightarrow h_Z$ is always 
representable, even in the case in which it is not an open embedding.
We shall not need this fact in the
sequel, see  \cite{ccf} and \cite{All} for more details.
\end{observation}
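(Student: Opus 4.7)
The observation bundles four distinct claims, and my plan is to derive them in order, using Yoneda's lemma as the main organizing tool and Proposition \ref{fiber-prop} together with the closure axioms of a superspace site at the geometric step.

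First I would establish the equivalence for fiber products of representables. Given $f:h_X\to h_Z$ and $g:h_Y\to h_Z$, full-faithfulness of Yoneda turns these into morphisms $f^\Y:X\to Z$ and $g^\Y:Y\to Z$ in $\mathcal{C}$. For any $T\in\Ob(\mathcal{C})$, an element of $(h_X\times_{h_Z}h_Y)(T)$ is by construction a pair $(\alpha:T\to X,\beta:T\to Y)$ with $f^\Y\circ\alpha=g^\Y\circ\beta$. Thus a representing object $W$ for $h_X\times_{h_Z}h_Y$ would exhibit the functor $T\mapsto\Hom_\mathcal{C}(T,W)$ as naturally isomorphic to the functor of such compatible pairs, which is precisely the universal property of the fiber product $X\times_Z Y$ in $\mathcal{C}$. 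Hence $h_X\times_{h_Z}h_Y$ is representable if and only if $X\times_Z Y$ exists in $\mathcal{C}$, and in that case $h_X\times_{h_Z}h_Y\simeq h_{X\times_Z Y}$ again by Yoneda. The second claim then follows by unwinding definitions: $f:h_X\to h_Z$ is representable iff $h_X\times_{h_Z}h_Y$ is representable for every $g:h_Y\to h_Z$ with $Y$ varying over $\Ob(\mathcal{C})$, which by the first claim is exactly the existence of $X\times_Z Y$ for every $g^\Y:Y\to Z$.

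For the third claim I would argue as follows. Suppose $f^\Y:X\to Z$ is an open embedding in $\mathcal{C}$ and let $g:Y\to Z$ be arbitrary. Proposition \ref{fiber-prop} gives us that $X\times_Z Y$ exists in $\SSpaces$ and equals $Y_{|g|^{-1}(X)}$, the restriction of $Y$ to an open subset of $|Y|$. The third bullet of Definition \ref{szs-def} ensures that $\mathcal{C}$ is closed under restriction to open subsuperspaces, so $Y_{|g|^{-1}(X)}\in\Ob(\mathcal{C})$. Since $\mathcal{C}$ is full in $\SSpaces$, the universal property established in $\SSpaces$ transfers verbatim to $\mathcal{C}$, so $X\times_Z Y$ exists in $\mathcal{C}$. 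Applying the second claim yields that $f$ is representable. The fourth claim requires only noting that fiber products in $\SSch$ exist (the classical construction, analogous to schemes), while finite completeness of $\SSlfg$ is supplied by Proposition \ref{subcat-prop}; in both cases the hypothesis of the second claim is met unconditionally.

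The only genuinely non-formal step is the third claim, where one must verify that the candidate fiber product produced by Proposition \ref{fiber-prop} genuinely lives in the subcategory $\mathcal{C}$ rather than merely in $\SSpaces$; this is precisely what the closure-under-open-subsuperspaces axiom in Definition \ref{szs-def} is designed to deliver, so the obstacle is really one of bookkeeping rather than of substance.
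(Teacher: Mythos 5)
Your proof is correct and follows exactly the route the paper intends: the observation is stated without proof, and your four steps (Yoneda for the equivalence between representability of $h_X\times_{h_Z}h_Y$ and existence of $X\times_Z Y$, unwinding the definition of representable morphism, Proposition \ref{fiber-prop} combined with the closure-under-open-subsuperspaces axiom of Definition \ref{szs-def} for the open-embedding case, and finite completeness of $\SSch$ and $\SSlfg$ for the final claim) are precisely the ingredients implicit in the text. The one genuinely non-formal point --- that the open subsuperspace $Y_{|g^\Y|^{-1}(X)}$ lies in $\mathcal{C}$ and that fullness transfers the universal property from $\SSpaces$ to $\mathcal{C}$ --- is handled correctly.
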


We are now ready to state our representability criterion, 
which is essentially a rewriting of the work by
Grothendieck, in our context. Our proof 
generalizes the classical treatment in \cite{GW} (Theorem 8.9) and furthermore
provides a unified perspective on the corresponding results in \cite{ccf} 
(Theorems 10.3.7 and 9.4.3, see also
the footnote after Def. \ref{superm-def}), besides
including the categories $\SSpacesL$, $\SSlfg$ (see Prop. 
\ref{subcat-prop})
whose representability issues, to our knowledge, are not dealt with
elsewhere. The reader is also invited to read the 
statement and the proof of Proposition 4.5.4, 
Chapitre 0 of \cite{EGA}, making a comparison.

\begin{theorem}\label{rep-crit}
Let us consider a \szs\:\: $(\mathcal{C},\mathcal{T})$ and a functor 
$F:\mathcal{C}^{\op}\rightarrow \mathrm{(Set)}$. 
Then $F$ is representable if and only if both of the following hold:
\begin{itemize}
\item[(1)]$F$ is a sheaf for $( \mathcal{C},\mathcal{T})$.
\item[(2)]$F$ has a open covering by representable 
functors $\lbrace f_i:U_i\rightarrow F\rbrace_{i \in I}$.
\end{itemize}
\end{theorem}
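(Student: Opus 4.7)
The forward direction is immediate from the material already assembled: if $F\cong h_X$, then $F$ is a sheaf because $\mathcal{T}$ is subcanonical on every \szs, and given any covering $\{U_i\to X\}\in \cT$ the family $\{h_{U_i}\to h_X\}$ is an open covering by representables. Indeed, the morphisms $h_{U_i}\to h_X$ are monomorphisms (open embeddings are monic), are representable thanks to Proposition \ref{fiber-prop}, and the pullback along any $h_Y\to h_X$ produces the open embedding $U_i\times_X Y\hookrightarrow Y$, which is again in $\cC$ by the restriction axiom of a \szs. Hence I concentrate on the converse.

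\textbf{Building local data.} Assume (1) and (2). Write $U_i\cong h_{X_i}$ for $X_i\in\Ob(\cC)$. For each ordered pair $(i,j)$, since $f_j:U_j\to F$ is an open subfunctor, the second projection $h_{X_i}\times_F h_{X_j}\to h_{X_i}$ is induced by Yoneda from an open embedding $X_{ij}\hookrightarrow X_i$, and $X_{ii}=X_i$. Interchanging the roles of $i$ and $j$ and invoking the canonical symmetry of the fibered product yields a canonical isomorphism $\phi_{ji}:X_{ij}\to X_{ji}$ in $\cC$. The key point to verify is the cocycle condition on triple overlaps. For this I would form $h_{X_i}\times_F h_{X_j}\times_F h_{X_k}$ and identify its three expressions (one for each index) as open sub-superspaces of $X_i$, $X_j$ and $X_k$ respectively; writing out universal properties one sees that these identifications coincide with $X_{ij}\cap X_{ik}$ in $X_i$, and that $\phi_{ki}$ and $\phi_{kj}\circ \phi_{ji}$ agree there, being induced by the same universal arrow.

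\textbf{Gluing.} With the datum $((X_i),(X_{ij}),(\phi_{ji}))$ in hand, the gluing axiom in the definition of \szs{} produces $X\in\Ob(\cC)$ together with a family of open embeddings $\phi_i:X_i\to X$ covering $X$. It remains to identify $h_X$ with $F$. Define the natural transformation $h_X\to F$ as follows: given $g:Y\to X$ in $\cC$, set $Y_i := g^{-1}\phi_i(X_i)\subseteq Y$, which is an open sub-superspace by Proposition \ref{fiber-prop}. The compositions $Y_i\to X_i\stackrel{f_i}{\to}F$ define elements of $F(Y_i)$, and the cocycle data ensure they agree on the overlaps $Y_i\cap Y_j$ (which correspond to $Y\times_F h_{X_i}\times_F h_{X_j}$). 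Since $F$ is a sheaf and $\{Y_i\to Y\}\in\cT$, these elements glue to a unique $\alpha(g)\in F(Y)$. Naturality in $Y$ is straightforward.

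\textbf{Bijectivity.} For the inverse direction, take $\alpha\in F(Y)$, equivalently a natural transformation $h_Y\to F$. Because $\{f_i:U_i\to F\}$ is an open covering, the pullbacks $h_{Y_i}\cong h_Y\times_F h_{X_i}$ are representable by open sub-superspaces $Y_i\subseteq Y$ covering $Y$ in $\cT$, and the second projection yields morphisms $g_i:Y_i\to X_i$. Composing with $\phi_i$ produces $\phi_i\circ g_i:Y_i\to X$. The cocycle for the $\phi_{ji}$, together with the construction of the $g_i$ via universal properties, shows that $\phi_i\circ g_i$ and $\phi_j\circ g_j$ coincide on $Y_i\cap Y_j$; since $h_X$ is a sheaf (by the forward direction applied to $X$ itself), these glue to a unique $g\in h_X(Y)$. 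The two assignments $g\mapsto\alpha(g)$ and $\alpha\mapsto g$ are mutually inverse by the uniqueness clauses in the two sheaf conditions, and this bijection is natural in $Y$, giving $F\cong h_X$.

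\textbf{Main obstacle.} The computationally delicate part is the cocycle verification in Step 2 and the compatibility checks on overlaps that feed the two applications of the sheaf axiom in Step 4; these amount to chasing several universal properties through the various fibered products $h_{X_i}\times_F h_{X_j}\times_F h_{X_k}$ and $h_Y\times_F h_{X_i}\times_F h_{X_j}$, and translating them via Yoneda back into statements about open sub-superspaces of $X_i$ or $Y$. Once these compatibilities are in hand, everything else is formal.
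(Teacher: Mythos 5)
Your proposal is correct and follows essentially the same route as the paper's proof: represent the pairwise fibered products $h_{X_i}\times_F h_{X_j}$ as open subsuperspaces, verify the cocycle condition via the triple products, glue using the superspace-site axioms, and then construct the two mutually inverse natural transformations between $F$ and $h_X$ by pulling back coverings and invoking the sheaf property of $F$ and of $h_X$. The only cosmetic difference is in the forward direction, where the paper uses the single-element covering $\{\id_F\}$ rather than the image of an arbitrary covering of $X$.
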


\begin{proof}
Let us suppose that $F\simeq h_X$, i.e. $F$ is representable. 
Then by Proposition 3.1.5, $F$ is a sheaf and (1) holds. 
Moreover, we observe that the identity natural transformation 
$\id_F:F\rightarrow F$ is an open covering by representable functors and 
so also (2) holds.\\
Conversely, suppose that 
both (1) and (2) hold.
Denote the open covering by representable functors of $F$ by 
$\lbrace f_i:U_i\simeq h_{X_i}\rightarrow F\rbrace_{i \in I}$. 
We can assume without loss of 
generality that $\lbrace f_i:h_{X_i}\rightarrow F\rbrace_{i \in I}$ 
is our covering, i.e. that $U_i=h_{X_i}$.
We want to build an $X\in \Ob(\mathcal{C})$ such that 
$h_X\simeq F$. We do this by constructing an appropriate gluing datum 
$((V_i)_{i\in I},( V_{ij})_{i,j\in I},(\phi_{ij})_{i,j\in I})$. We set $V_i=X_i$. 
By the definition of open subfunctor, we have that $h_{X_i}\times_Fh_{X_j}$ 
is representable, and accordingly there exists an element 
$X_{ij}\in\Ob(\mathcal{C})$ such that 
$$
h_{X_i}\times_Fh_{X_j}\simeq h_{X_{ij}}
$$ 
with $\varphi_{ji}:=(\pr_2)_i^\Y:X_{ij}\rightarrow X_j$ an open 
embedding of superspaces. We set $V_{ji}=(\pr_2)_i^\Y(X_{ij})$.
The morphisms $f_i$ are open subfunctors, so they are injective
hence $(f_i)_Q:h_{X_i}(Q)\rightarrow F(Q)$ are also injective. 
So we can identify $(h_{X_i}\times_Fh_{X_j})(Q)$ and 
$(h_{X_j}\times_Fh_{X_i})(Q)$ with $h_{X_i}(Q)\cap h_{X_j}(Q)\subseteq F(Q)$ 
for all $Q$. Then $h_{X_j}\times_Fh_{X_i}=h_{X_i}\times_Fh_{X_j}$ 
and  we have $X_{ij}\simeq X_{ji}$. We define 
$\phi_{ij}=\varphi_{ij}\circ \varphi_{ji}^{-1}:V_{ji}\simeq V_{ij}$
To show that $((V_i)_{i\in I},( V_{ij})_{i,j\in I},( (\phi_{ij})_{i,j\in I})$ 
is a gluing datum, we have to check the cocycle condition.
Let  $X_{ijk}$ be the superspace
whose functor of points is 
$h_{X_{ijk}}:=h_{X_{i}}\times_F h_{X_{j}}\times_F h_{X_{k}}$.
The $X_{ijk}$'s are all isomorphic for any permutation
of the indices $i$, $j$, $k$, 
via the appropriate restrictions of $\phi_{ij}$. We leave the easy
details to the reader.
Consequently it is almost immediate to verify the cocycle condition 
$$
\phi_{ki}=\phi_{kj}\circ \phi_{ji}\quad \mathrm{on}\quad V_{ij}\cap V_{ik}.
$$
Since $((V_i)_{i\in I},( V_{ij})_{i,j\in I},( (\phi_{ij})_{i,j\in I})$ is a gluing 
datum we get a superspace $X$, which, by definition of \szs, 
is an object of $\mathcal{C}$. Notice that we have a covering 
$\mathcal{U}=\lbrace X_i\rightarrow X\rbrace_{i\in I}\in \mathcal{T}$, 
hence $\lbrace h_{X_i}\rightarrow h_{X}\rbrace_{i\in I}$ 
is an open covering by representable functors. % via the Yoneda lemma.
We are left to prove that $F\simeq h_X$.
We construct a natural transformation $\eta : F\rightarrow h_X$. 
For each superspace $T\in \Ob(\mathcal{C})$, we need to give
a morphism $\eta_T :F(T)\rightarrow h_X(T)$.  
By Yoneda's Lemma, $F(T)\simeq \mathrm{Hom}(h_T,F)$, so we take 
$g\in \mathrm{Hom}(h_T,F)$. Consider the diagram:
\begin{displaymath}
\xymatrix{h_{Y_i}\simeq h_{X_i}\times_F h_T\ar[r] \ar[d]_{(pr_1)_i} & h_T \ar[d]^g\\
h_{X_i}\ar[r]_{f_i} & F
}
\end{displaymath}
We have $(Y_i \rightarrow T)_{i\in I}\in \Cov(T)$. By Yoneda's lemma, 
we obtain a family of morphisms $(\pr_1)_i^\Y:Y_i\rightarrow X_i$, 
that glue together, to give a morphism $t:T\rightarrow X$. 
So we set $\eta_T(g)=t$. 
\\
Now we have to build a natural transformation $\delta: h_X\rightarrow F$, 
which is the inverse of $\eta$. We define for each each superspace 
$T\in \Ob(\mathcal{C})$, a morphism $\delta_T :h_X(T)\rightarrow F(T)$.
Take $t\in h_X(T)$ and set $Y_i=t^{-1}(X_i)=X_i\times_XT$. 
This is the preimage of the open superspace $X_i$ in $X$ via the morphism $t$.
Then $(l_i:Y_i\rightarrow T)_{i\in I}\in \Cov(T)$. 
We obtain morphisms $g_i:Y_i\rightarrow X_i$, which correspond via 
Yoneda's Lemma to natural transformations $g_i':h_{Y_i}\rightarrow h_{X_i}=U_i$. 
The morphisms $f_i\circ g_i'$ glue together, because $F$ is a sheaf, 
to a morphism $g':h_T\rightarrow F$, which corresponds by Yoneda's Lemma 
to an element $g\in F(T)$. We define $\delta_T(t)=g$.
One can readily check that $\eta$ and $\delta$ are natural 
transformations one inverse of the other.
\end{proof}

Our result has a straightforward generalization to $\cC/S$,
we leave the details to the reader. 

\medskip
Now we can state a corollary of the previous theorem, 
that can be useful in applications.

\begin{corollary} \label{rep-cor}
Let be $(\mathcal{C},\mathcal{T})$ a \szs\:\: and let be $\mathcal{T'}$ a topology on $\mathcal{C}$ so that $\mathcal{T}\prec\mathcal{T'}$. 
If a functor $F:\mathcal{C}^{\op}\rightarrow \mathrm{(Set)}$ satisfies 
\begin{itemize}
\item[(1)]$F$ is a sheaf for $( \mathcal{C},\mathcal{T'})$,
\item[(2)]$F$ has an open covering by representable functors $\lbrace f_i:U_i\rightarrow F\rbrace_{i \in I}$,
\end{itemize}
then it is representable. Moreover this condition is also necessary 
for the representability of $F$ if $\mathcal{T'}$ is subcanonical.
\end{corollary}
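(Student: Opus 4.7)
The plan is to derive this corollary directly from Theorem~\ref{rep-crit} by invoking the comparison of sheaves under subordinate topologies from Section~\ref{prelim-sec} (the proposition stating that if $\mathcal{T}\prec\mathcal{T'}$, then every sheaf for $\mathcal{T'}$ is automatically a sheaf for $\mathcal{T}$). The corollary is essentially a practical convenience: it lets one verify the sheaf condition with respect to a possibly finer, more natural topology $\mathcal{T'}$ while still concluding representability inside the superspace site $(\mathcal{C},\mathcal{T})$.

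For the sufficiency direction, I would argue as follows. Hypothesis~(1) asserts that $F$ is a sheaf for $\mathcal{T'}$; applying the proposition just mentioned together with $\mathcal{T}\prec\mathcal{T'}$, I obtain that $F$ is a sheaf for $\mathcal{T}$. This is precisely condition~(1) of Theorem~\ref{rep-crit}, while condition~(2) is identical in both statements. Hence Theorem~\ref{rep-crit} produces an $X\in\Ob(\mathcal{C})$ with $F\simeq h_X$, proving representability.

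For the necessity half, I would assume $F\simeq h_X$ and that $\mathcal{T'}$ is subcanonical. By definition of subcanonical, $h_X$ is a sheaf for $\mathcal{T'}$, so~(1) holds. For~(2), the trivial one-element family $\{\id_F:F\rightarrow F\}$ is an open covering by representable functors: the identity is a monomorphism; for any $g:h_Y\rightarrow F$ the fibered product $F\times_F h_Y$ is $h_Y$ itself, with second projection corresponding via Yoneda's Lemma to $\id_Y:Y\rightarrow Y$, which is an open embedding; and $\{\id_Y:Y\rightarrow Y\}$ is a covering of $Y$ by axiom~(1) of Definition~\ref{gtop-def}. This is the same observation already used at the start of the proof of Theorem~\ref{rep-crit}.

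There is no genuine obstacle in either direction; the entire content of the corollary rests on Theorem~\ref{rep-crit} and on the standard fact that refinements transport the sheaf property from the finer to the coarser topology. The slight subtlety worth noting is that the subcanonical hypothesis on $\mathcal{T'}$ is indispensable for the necessity statement, since without it a representable functor need not be a sheaf for $\mathcal{T'}$ and~(1) could fail.
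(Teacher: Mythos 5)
Your proposal is correct and follows exactly the paper's own argument: transport the sheaf property from $\mathcal{T'}$ to $\mathcal{T}$ via the subordination proposition, apply Theorem~\ref{rep-crit} for sufficiency, and for necessity use subcanonicity of $\mathcal{T'}$ together with the identity open covering. The only difference is that you spell out in more detail why $\{\id_F\}$ is an open covering by representable functors, which the paper leaves implicit.
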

\begin{proof}
If (1) and (2) hold, the hypothesis $\mathcal{T}\prec\mathcal{T'}$ 
implies that $F$ is a sheaf for $( \mathcal{C},\mathcal{T})$ and 
then $F$ is representable by the previous theorem. 
Moreover, if $\mathcal{T'}$ is subcanonical, then a representable 
functor is by definition a sheaf for the topology $\mathcal{T'}$, 
and $\id_F:F\rightarrow F$ is a open covering by representable functors, 
then (1) and (2) are satisfied.
\end{proof}

The following observation is important for the
applications of our main result \ref{rep-crit}.

\begin{observation} 
The representability criterion \ref{rep-crit} 
holds for the sites detailed in Obs. \ref{sites-obs}.
In other words, to prove that a functor
$F: \cC^\op \lra \Sets$ is the functor of points of an object
in $\cC$, where $(\cC, \cT)$ is a superspace site, it is enough
to verify the properties (1) and (2) of \ref{rep-crit}. 
\end{observation}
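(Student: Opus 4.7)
The approach is to prove the non-trivial implication by constructing the representing object from a gluing datum built out of the given open covering, and then producing mutually inverse natural transformations using the sheaf condition on $F$. The ``only if'' direction is immediate: by the proposition preceding the theorem, $\mathcal{T}$ is subcanonical, so every representable functor is a sheaf, and $\{\id_F : F \to F\}$ tautologically provides an open covering by a representable functor.

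For the converse, write the covering as $\{f_i : h_{X_i} \to F\}_{i \in I}$. Since each $f_i$ is an open subfunctor, the fibered products $h_{X_i} \times_F h_{X_j}$ are representable, say by $X_{ij}$, with both projections $X_{ij} \to X_i$ and $X_{ij} \to X_j$ open embeddings into $X_i$ and $X_j$ respectively. Because the $f_i$ are monomorphisms, $h_{X_i} \times_F h_{X_j}$ canonically identifies with $h_{X_j} \times_F h_{X_i}$, yielding an isomorphism $X_{ij} \simeq X_{ji}$. Composing with the two projections gives the transition isomorphisms $\phi_{ij} : V_{ji} \to V_{ij}$ between open subsuperspaces of $X_j$ and $X_i$. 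The cocycle condition is verified by considering the triple fibered product $h_{X_i} \times_F h_{X_j} \times_F h_{X_k}$, which is representable by some $X_{ijk}$ that carries compatible open embeddings into the $X_{ij}$'s for all three pairs, forcing the three $\phi_{**}$'s to compose correctly. The gluing axiom in Definition \ref{szs-def} then produces an object $X \in \Ob(\mathcal{C})$ together with a covering $\{X_i \to X\}_{i \in I} \in \mathcal{T}$.

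It remains to produce a natural isomorphism $F \simeq h_X$. For $\eta : F \to h_X$, given $\xi \in F(T)$, view it via Yoneda as $g : h_T \to F$; pull back the covering along $g$ to obtain representables $Y_i$ with $h_{Y_i} \simeq h_{X_i} \times_F h_T$, open embeddings $Y_i \hookrightarrow T$ forming a cover of $T$, and morphisms $Y_i \to X_i$. The composites $Y_i \to X_i \to X$ agree on pairwise overlaps $Y_i \times_T Y_j$ (this overlap is computed via the triple fibered product and maps into $X_{ij}$), so since $h_X$ is a sheaf (subcanonicity), they glue to a unique morphism $t : T \to X$, and one sets $\eta_T(\xi) := t$. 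For the inverse $\delta : h_X \to F$, given $t : T \to X$, form $Y_i := t^{-1}(X_i) \simeq X_i \times_X T$, which cover $T$ and come with morphisms $g_i : Y_i \to X_i$; these define elements $f_i \circ (g_i)^{\Y} \in F(Y_i)$ whose pullbacks agree on $Y_i \times_T Y_j$, so since $F$ is a sheaf they glue to a unique element of $F(T)$, which is $\delta_T(t)$.

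The main obstacle I anticipate is the bookkeeping for compatibility on overlaps in both constructions: one must trace through the open subfunctor definition to see that the fibered products $h_{X_i} \times_F h_{X_j}$ are exactly what is needed to certify agreement on intersections. Once this is cleanly laid out, proving $\eta$ and $\delta$ are mutually inverse reduces, by sheaf uniqueness on the covering $\{Y_i \to T\}$, to checking the identity after restriction to each $Y_i$, where both compositions become tautological by construction. Naturality in $T$ is routine, since all ingredients (pullback of covers, Yoneda identifications, gluing) are natural.
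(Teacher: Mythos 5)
Your argument is correct and follows essentially the same route as the paper: the construction of a gluing datum from the representable open cover, the identification $X_{ij}\simeq X_{ji}$ via monicity of the $f_i$, the cocycle check through the triple fibered products, and the mutually inverse transformations $\eta$ and $\delta$ built from the sheaf property are exactly the paper's proof of Theorem \ref{rep-crit}. The only content specific to this particular observation --- namely that $\SMan_k$, $\SSpacesLk$, $\SSlfg$ and $\SSch$ with the global super topology actually satisfy the axioms of a superspace site, so that the theorem applies to them --- is something you assume rather than verify, but it is supplied by Observation \ref{sites-obs} together with Propositions \ref{subcat-prop} and \ref{fiber-prop}.
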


\end{document}